\documentclass[letterpaper, 10 pt, conference]{ieeeconf}  

\IEEEoverridecommandlockouts    

\usepackage{graphics} 
\usepackage{amsmath} 
\usepackage{amssymb}  
\usepackage[FIGTOPCAP]{subfigure}
\usepackage{graphicx}
\newcommand{\X}{\mathbb{X}}
\newcommand{\R}{\mathbb{R}}
\newcommand{\N}{\mathbb{N}}

\newcommand{\D}{\mathbb{D}}
\newcommand{\K}{\mathcal{K}}

\usepackage[dvipsnames]{xcolor}

\newtheorem{defn}{Definition}
\newtheorem{prop}{Proposition}

\newtheorem{remark}{Remark}%

\title{\LARGE \bf Error bounds on analytic Koopman-based Lyapunov functions
}

\author{François-Grégoire Bierwart$^{1}$ and Alexandre Mauroy$^{2}$
\thanks{$^{1,2}$ Department of Mathematics and Namur Research Institute for Complex Systems (naXys), University of Namur, 5000, Belgium
        {\tt\small francois-gregoire.bierwart.@unamur.be, alexandre.mauroy@unamur.be}}%
}

\begin{document}

\maketitle
\thispagestyle{empty}
\pagestyle{empty}

\begin{abstract}
The Koopman operator provides an infinite-dimensional linear description of nonlinear dynamical systems that can be leveraged in the context of stability analysis. In particular, Lyapunov functions can be obtained in a systematic way via the eigenfunctions of the Koopman operator. However, these eigenfunctions are computed from finite-dimensional approximations, resulting in approximated Lyapunov functions that must be validated.
In this paper, we provide theoretical error bounds on the approximation of the eigenfunctions of the Koopman operator in the case of analytic vector field and finite-dimensional approximation in polynomial subspaces. We leverage these results to assess the validity of Koopman-based Lyapunov functions and obtain an optimization-free inner approximation of the region of attraction of an equilibrium.
\end{abstract}

\section{Introduction}

Stability analysis of nonlinear dynamics is crucial to characterize the long-term behavior of many natural phenomena and artificial systems. Such analysis mostly relies on the existence of a Lyapunov function \cite{c2}, whose design is usually challenging. This has lead to the emergence of various numerical methods to compute Lyapunov functions, which are based, for instance, on sum-of-squares polynomial optimization \cite{c18}, Zubov's equation \cite{c16}, or piecewise linear functions \cite{c17}, to list a few. We refer the reader to \cite{c15} for an overview. 

More recently, the Koopman operator framework has appeared as a valid alternative approach to stability analysis. This framework provides a linear description of the original nonlinear system in term of the evolution of \textit{observable} functions. Linearity can therefore be leveraged for stability analysis, and in particular, the eigenfunctions of the Koopman operator can be used to construct Lyapunov functions in a systematic way \cite{c3}. However, since the operator is defined on an infinite dimensional space, the eigenfunctions are typically approximated in a finite-dimensional subspace spanned by a set of \textit{basis functions} (see e.g. \cite{c4} for a review). This results in approximated candidate Lyapunov functions that must be validated.

While numerical validation schemes relying on SOS-based methods have been proposed in previous works \cite{c6,c19}, there is no theoretical contribution on the validity of Lyapunov functions based on the approximated eigenfunctions of the Koopman operator (even though theoretical results have been proposed recently in a similar context for Gramian-based Lyapunov functions \cite{c16}). We fill this gap in this paper by presenting theoretical error bounds on the approximation of the Lyapunov function and its time derivative. Focusing on the case of monomial basis functions and analytic vector fields, we prove the convergence of the approximated Lyapunov function under the assumption of an analytic vector field. Moreover, the obtained error bounds provide inner approximations of the region of attraction (ROA) of the equilibrium point based on level sets of the candidate Lyapunov function.  Yet, our estimation does not involve optimization techniques such as SOS methods \cite{c18}.


The paper is organized as follows. In Section \ref{preli}, we describe the main setting and the construction of Lyapunov function from the eigenfunctions of the Koopman operator. Upper bounds on the approximation error of the eigenfunctions are provided in Section \ref{sec:error_phi}, while upper bounds on the approximation of the Lyapunov function and its time derivative are obtained in Section \ref{stab_sec}. An inner approximation of the ROA is also derived, which is illustrated with two numerical examples in Section \ref{example}. Finally, concluding remarks and perspectives are given in Section \ref{conclusion}.

\section{Preliminaries}\label{preli}

Consider the dynamical system
\begin{equation}\label{eq}
\dot{x} = F(x), \quad x\in\X
\end{equation}
where $\X\subset\R^n$ is a compact set and the vector field $F$ is analytic and Lipschitz continuous. We denote by \mbox{$\varphi^{t}(x) : \R^+ \times \mathbb{X} \rightarrow \mathbb{X}$} the flow map generated by (\ref{eq}). We assume that the system (\ref{eq}) admits a locally stable hyperbolic equilibrium at the origin and that the eigenvalues of the Jacobian matrix $\mathbf{J}_F(0)$ of $F$ evaluated at $0$ are non-resonant. The classical approach to study the global stability of \mbox{$x^* = 0$} relies on the design of a Lyapunov function $V$, i.e. a positive function decreasing along the system trajectories \cite{c2}. In this paper, we will rely on the results of \cite{c3}, where spectral properties of the Koopman operator are leveraged to construct Lyapunov functions in a systematic way.

\subsection{Koopman operator framework for stability analysis}

In this section, we briefly describe the Koopman operator framework for stability analysis of dynamical systems.

\begin{defn}
Let $\mathcal{F}$ be a Banach space of observable functions defined on $\mathbb{X}$. We define the Koopman semigroup as the family $\{\K_t\}_{t\geq 0}$, $\K_t : \mathcal{F} \rightarrow \mathcal{F}$ such that
\mbox{$
\K_t f ~=~ f \circ \varphi^{t}, \quad \forall f \in \mathcal{F}.
$}
\end{defn}
The semigroup $\{\K_t\}_t$ consists of linear operators, so that we can define their eigenvalues and eigenfunctions. We define $\phi_{\lambda}$ as an eigenfunction of $\K_t$ (for some $t\geq 0$) associated with the eigenvalue $e^{\lambda t}$ if 
$\K_t\phi_{\lambda} = e^{\lambda t}\phi_{\lambda}.$
If $\mathcal{F} = C(\X)$, the semigroup of Koopman operator is strongly continuous and we can define its infinitesimal generator \vspace{-0.1cm} 

$$
\mathcal{L}f ~ \triangleq ~ \lim_{t \rightarrow 0^{+}}\dfrac{\mathcal{K}_tf - f}{t} ~=~ F\cdot\nabla f\vspace{0.1cm}
$$

\noindent for all $f\in C^1(\X)$. One can easily show that $\mathcal{L}\phi_{\lambda} ~ = ~\lambda \phi_{\lambda}$. Since the expression of the vector field is known, we will rather use the infinitesimal generator rather than the semigroup.

The eigenfunctions of $\mathcal{L}$ associated with eigenvalues with strictly negative real part capture stability properties of \eqref{eq}. Indeed, it is shown in \cite{c3} that, if the origin is an hyperbolic stable equilibrium point over a (invariant) region $D \subset \X$, there are $n$ \emph{principal eigenfunctions} $\phi_{\lambda_i} \in C^1(D)$, \mbox{$i = 1,\ldots,n$,} such that $\Re(\lambda_i) < 0$ where $\lambda_i$ are the eigenvalues of $\mathbf{J}_F(0)$. These eigenfunctions allow to construct the following generic Lyapunov function given by 
\begin{equation}\label{TrueLyap}
V ~=~ \sum_{i=1}^n\left|\phi_{\lambda_i}\right|^2,~~~\forall x\in D.
\end{equation}

\subsection{Approximation of the Lyapunov function}

The infinitesimal generator $\mathcal{L}$ is defined on an infinite-dimensional space, so that the computation of its eigenfunctions is not straightforward. A common way to circumvent this issue is to compute a finite-dimensional approximation of $\mathcal{L}$ (see e.g. \cite[Chapter 1]{c4} for an overview). Let $\mathcal{F}_{\ell}$ be a finite-dimensional subspace of $\mathcal{F}$ spanned by a set of $\ell \in \N_0$ basis functions $\{\psi_j\}_{j=1}^{\ell}$. An approximation of the infinitesimal generator $\mathcal{L}$ is given by 
$
\mathcal{L}_{\ell} ~:=~ \Pi\,\mathcal{L}\hspace{-0.12cm}\mid_{\mathcal{F}_{\ell}} ~:~ \mathcal{F}_{\ell} \rightarrow \mathcal{F}_{\ell}.
$
The approximation $\mathcal{L}_{\ell}$ is a finite-dimensional operator that can be represented by a matrix $\mathbf{L}_{\ell} \in \R^{{\ell}\times {\ell}}$ of $\mathcal{L}$
whose $i$-th column $\mathbf{c}_i$ contains the coefficient of the expansion of $\mathcal{L}_{\ell}\psi_i$ in the basis. The eigenfunctions $\widetilde{\phi}_{\,\widetilde{\lambda}}$ of $\mathcal{L}_{\ell}$, associated with the eigenvalues $\widetilde{\lambda}$, are given by the right eigenvectors $v$ of $\mathbf{L}_{\ell}$, i.e. $\widetilde{\phi}_{\,\widetilde{\lambda}}(x) = v^T \Psi(x)$ where $\Psi(x) \triangleq [\psi_1(x),\ldots,\psi_N(x)]^\top$ (see \cite{c4} for more details). They can be used to approximate the true principal eigenfunctions $\phi_{\lambda_i}$, provided that $\lambda_i \approx \widetilde{\lambda}_i$, and therefore to construct a candidate Lyapunov function of the form \eqref{TrueLyap}. In what follows, we will denote by $\widetilde{V}$ the Lyapunov candidate computed with $\widetilde{\phi}_{\,\widetilde{\lambda}_i}$.

There is no guarantee that the approximated eigenfunctions approximate well those of $\mathcal{L}$. However, it follows from the Poincaré linearization theorem that there exist analytic eigenfunctions in some neighborhood $\D^n(S) := \{x\in\R^n \mid ~|x_i| < S ~\forall i\}$ of the origin (for some $S>0$), provided that the eigenvalues of $\mathbf{J}_F(0)$ are non-resonant \cite{c12}. Note that the approximation region $\mathbb{D}^n(S)$ might be conservative depending on the shape of the ROA and the existence of unstable equilibrium points close to the origin. Since the eigenfunctions are analytic over $\D^n(S)$, a natural choice for $\mathcal{F}_\ell$ is the subspace of monomials up to order $N \in\mathbb{N}$, with $\ell = \frac{(n+N)!}{n!N!}$. In this case, it is well-known that $\sigma(\mathbf{J}_F(0)) \subset \sigma(\mathbf{L_\ell})$ and that $\widetilde{\phi}_{\lambda} = \Pi \phi_{\lambda}$ for any analytic $\phi_\lambda$, where $\Pi := P_N$ is the truncation operator defined as 
$$
P_N(x_1^{k_1}\ldots\,x_n^{k_n}) = 
\begin{cases}
     x_1^{k_1}\ldots\,x_n^{k_n} & \text{if } \sum_i k_i \leq N,\\
     0 & \text{otherwise}. 
\end{cases}
$$

Note that this property follows from the fact that
\begin{equation}
\label{eq:PLP}
    P_N\mathcal{L}P_N  = P_N\mathcal{L}.
\end{equation}
This implies that analytic (principal) eigenfunctions are approximated by their truncated Taylor series, so that $P_N\phi_{\lambda_i}(x) \rightarrow \phi_{\lambda_i}(x)$ as $N\rightarrow\infty$ for all $x\in\D^n(S)$ and $i=1,\ldots,n$. Moreover, the exact projection $P_N \phi_{\lambda}$ can be easily computed by exploiting the triangular structure of $\mathbf{L}_\ell$ (see e.g. \cite{c14}).


\section{Error bounds on the approximation of the eigenfunctions}\label{sec:error_phi}

In this section, we derive error bounds on the Taylor approximation of the eigenfunction 
$$
\widetilde{\phi}_{\lambda} = P_N \phi_{\lambda}(x) = \sum_{\substack{k\in\N_0^n \\ |k|\leq N}} c_k\, x^k
$$
where $c_k$ are the Taylor coefficients of the expansion and where $k\in\N_0^n$ refers to a multi-index notation, with \mbox{$|k| = k_1+\cdots+k_n$} and $x^k = x_1^{k_1}\ldots\,x_n^{k_n}$. Note that the results developed below are valid for any analytic function. Let us first consider the following general result. 
\begin{prop}[\cite{c13}]\label{thm:error_phi1}
Let $\phi_{\lambda}$ be an analytic function over $\D^n(S)$ such that 
$
\sup_{x\in\overline{\D^n(S)}}|\phi_{\lambda}(x)| \leq M
$. Then, \mbox{for any $x\in \D^n(R)$,}
\begin{equation}\label{eq:error_phi1}
\left|\phi_{\lambda}(x)-(P_N \phi_{\lambda})(x)\right| \leq \dfrac{M}{1-R/S}\left(\dfrac{R}{S}\right)^{N+1},
\end{equation}
where $0<R<S$.
\end{prop}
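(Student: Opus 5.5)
The plan is to reduce to a one-variable Cauchy estimate along complex lines through the origin, rather than summing multivariable Cauchy estimates over all multi-indices. The reason is that a naive tail bound $\sum_{|k|>N} |c_k|\,|x^k|$ with $|c_k|\le M S^{-|k|}$ produces a combinatorial factor $\binom{m+n-1}{n-1}$ for each degree $m$. That factor does not appear in \eqref{eq:error_phi1}.

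\textbf{Setting and interpretation.} As is standard in \cite{c13}, I interpret analyticity over $\D^n(S)$ as follows: $\phi_\lambda$ extends holomorphically to the complex polydisc $\{z\in\mathbb{C}^n : |z_i|<S\}$, and the bound $M$ holds on its closure. Fix $x\in\D^n(R)$ with $x\neq 0$; the case $x=0$ is trivial since both sides vanish or the left side is $0$. Put $r:=\max_i|x_i|<R$.

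\textbf{Step 1: slice function.} Define $g(t):=\phi_\lambda(t x)$ for $t\in\mathbb{C}$. If $|t|\le S/r$, then $|t x_i|\le S$ for every $i$, so $tx$ lies in the closed polydisc. Hence $g$ is holomorphic on $|t|<S/r$ and satisfies $|g|\le M$ on $|t|\le S/r$.

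Group the Taylor series of $\phi_\lambda$ into homogeneous parts $h_m(x)=\sum_{|k|=m}c_k x^k$. Then
\begin{equation*}
g(t)=\sum_{m\ge 0} h_m(x)\,t^m ,
\end{equation*}
so $h_m(x)$ is the $m$-th Taylor coefficient of $g$ at $0$. Moreover, $(P_N\phi_\lambda)(x)=\sum_{m\le N}h_m(x)$, since $P_N$ keeps exactly the monomials of total degree at most $N$.

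\textbf{Step 2: Cauchy estimate.} Apply Cauchy's inequality on the circle $|t|=\rho<S/r$ and let $\rho\to S/r$. This gives
\begin{equation*}
|h_m(x)|\le M\,(r/S)^m \le M\,(R/S)^m .
\end{equation*}

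\textbf{Step 3: sum the tail.} Since $1<S/r$, the series for $g$ converges at $t=1$, and $g(1)=\phi_\lambda(x)$. Therefore
\begin{equation*}
|\phi_\lambda(x)-(P_N\phi_\lambda)(x)|\le\sum_{m>N} M (R/S)^m=\frac{M}{1-R/S}\left(\frac{R}{S}\right)^{N+1}.
\end{equation*}

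\textbf{Main obstacle.} The only delicate point is Step 1. It requires that the grouping by homogeneous degree coincide with the one-variable Taylor series of $g$, and that $g$ be controlled on the full disc of radius $S/r$. Both follow from absolute convergence of the power series of $\phi_\lambda$ on the open polydisc. The bound on the boundary comes from the assumed $\sup$ over the closure; if only the open-set bound were available, one would instead work on $|t|=\rho$ and let $\rho\uparrow S/r$.
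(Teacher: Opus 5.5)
The paper gives no proof of this proposition (it is quoted from \cite{c13}), so the natural comparison is with the argument the paper itself uses for Proposition~\ref{thm:error_phi2}. Your proof is correct. Absolute convergence on the open complex polydisc justifies regrouping into homogeneous parts, so $h_m(x)$ is the $m$-th Taylor coefficient of $g(t)=\phi_\lambda(tx)$. The one-variable Cauchy inequality with $\rho\uparrow S/\|x\|_\infty$ then gives $|h_m(x)|\le M(\|x\|_\infty/S)^m$, which is slightly sharper than needed. One small wording fix in your last paragraph: holomorphy of $g$ comes from composing $\phi_\lambda$ with $t\mapsto tx$, and the bound $|g|\le M$ comes from the hypothesis; absolute convergence only justifies the regrouping.

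The termwise route of Proposition~\ref{thm:error_phi2}, fed with the multivariable Cauchy estimate $|c_k|S^{|k|}\le M$, yields $M\sum_{m>N}\binom{n+m-1}{m}(R/S)^m$. This carries exactly the multiset factor you anticipated and is strictly weaker than \eqref{eq:error_phi1} for $n\ge 2$. Your slice argument avoids it because it treats each homogeneous block $h_m(x)$ as a single Taylor coefficient of a one-variable function, instead of bounding its monomials separately. The price is that you need a sup bound on the function over the whole polydisc, so the argument cannot run from tail-coefficient data alone. Tail-coefficient data is precisely what Propositions~\ref{thm:error_phi2} and~\ref{thm:error_phi3} are designed to use.

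Your reading of the hypothesis is not merely a convention; it is required. You take it as holomorphy on the complex polydisc, with $M$ bounding the extension on its closure. If $\D^n(S)\subset\R^n$ is taken literally, the statement is false. For $n=1$, $\phi(x)=1/(1+4x^2)$ is real analytic on $(-1,1)$ with $\sup_{[-1,1]}|\phi|=1$, yet its Taylor series at $0$ has radius $1/2$. So for $1/2<|x|<R<S=1$ the left side of \eqref{eq:error_phi1} grows without bound in $N$, while the right side tends to $0$.
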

This result can be used with some a priori bound on the eigenfunction. However, such bound is not known. As an alternative, we derive the following result, which relies on the Taylor coefficients of the eigenfunction. 
\begin{prop}\label{thm:error_phi2}
Let $\phi_{\lambda}$ be an analytic function over $\D^n(S)$ for some $S>0$ and such that its Taylor coefficients satisfy $ \max_{|k|>N}|c_k|S^k < M_1(N) \in \R^+$. Then, for all $x\in \D^n(R)$ with $0<R<S$,
\begin{equation}\label{eq:error_phi2}
\begin{split}
& |\phi_{\lambda}(x)-(P_N\phi_{\lambda})(x)| \\ 
& \quad < M_1(N)\left[\left(\dfrac{1}{1-R/S}\right)^n - \displaystyle\sum_{k=0}^{N}\left(\!\!{n\choose k}\!\!\right)\left(\dfrac{R}{S}\right)^{k}\right].
\end{split}
\end{equation}
\end{prop}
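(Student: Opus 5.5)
The plan is to write the truncation error as the tail of the Taylor series, bound it term by term using the coefficient hypothesis, and recognize what remains as a geometric-series remainder. For $x\in\D^n(R)$ the series of $\phi_\lambda$ converges absolutely, since $R<S$ and $\phi_\lambda$ is analytic on the polydisc $\D^n(S)$. Hence
\begin{equation*}
\phi_{\lambda}(x)-(P_N\phi_{\lambda})(x) = \sum_{|k|>N} c_k\, x^k .
\end{equation*}
I will apply the triangle inequality and use $|x^k| = |x_1|^{k_1}\cdots|x_n|^{k_n} < R^{|k|}$. Writing $S^k$ for $S^{|k|}$, as intended in the statement, this gives
\begin{equation*}
|\phi_{\lambda}(x)-(P_N\phi_{\lambda})(x)| \le \sum_{|k|>N} |c_k| S^{|k|}\left(\frac{R}{S}\right)^{|k|} < M_1(N) \sum_{|k|>N}\left(\frac{R}{S}\right)^{|k|}.
\end{equation*}
The inequality is strict as soon as the tail contains a nonzero term, because $|x_i|<R$ and $\max_{|k|>N}|c_k|S^{|k|}<M_1(N)$. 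If the tail vanishes the left-hand side is $0$, and the right-hand side is positive, so strictness holds in that case too.

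The remaining step is purely combinatorial. I group the multi-indices by total degree $m=|k|$. The number of $k\in\N_0^n$ with $|k|=m$ is the multiset coefficient $\left(\!{n\choose m}\!\right)={n+m-1\choose m}$. Setting $r=R/S\in(0,1)$, this gives
\begin{equation*}
\sum_{|k|>N} r^{|k|} = \sum_{m=N+1}^{\infty}\left(\!\!{n\choose m}\!\!\right) r^m = \sum_{m=0}^{\infty}\left(\!\!{n\choose m}\!\!\right) r^m - \sum_{m=0}^{N}\left(\!\!{n\choose m}\!\!\right) r^m .
\end{equation*}
For the full series I use the product identity
\begin{equation*}
\sum_{k\in\N_0^n} r^{|k|}=\prod_{i=1}^n \sum_{k_i\ge 0} r^{k_i} = (1-r)^{-n}.
\end{equation*}
This is valid because all terms are nonnegative and $r<1$, which justifies rearranging the series. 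Substituting it yields exactly the bracket in \eqref{eq:error_phi2}.

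There is no genuine obstacle. The only points needing care are justifying the rearrangement and regrouping of the multi-index series, which follows from absolute convergence on $\D^n(R)$, and correctly identifying the count of multi-indices of a given degree as the multiset coefficient.
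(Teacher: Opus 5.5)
Your proposal is correct and follows essentially the same route as the paper: bound the Taylor tail termwise by $M_1(N)\sum_{|k|>N}(R/S)^{|k|}$, group multi-indices by total degree using the multiset coefficients, and subtract the first $N+1$ terms from the generating function $(1-R/S)^{-n}$. Your product-of-geometric-series derivation of that identity and your justification of the strict inequality are small additions of care that the paper leaves implicit.
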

\begin{proof}
We have 
\begin{equation*}
\left|\phi_{\lambda}- P_N\phi_{\lambda}\right| = \left|\displaystyle\sum_{|k|>N}c_kx^k\right|
 \leq  \displaystyle\sum_{|k|>N}|c_k||x|^{k}.
\end{equation*}
For $x \in \D^n(R)$, since $|c_k|S^{k} < M_1(N) $ for all $|k|>N$, it holds 
$$
\begin{array}{rcl}
\displaystyle\sum_{|k|>N}|c_k||x|^{k} &\leq&
\displaystyle\sum_{|k|>N}|c_{k}|R^k,\\
&=& \displaystyle\sum_{|k|>N}|c_{k}|R^k\left(\dfrac{S}{S}\right)^{k},\\
& \leq & M_1(N)\displaystyle\sum_{|k|>N}\left(\dfrac{R}{S}\right)^{k}.\\
\end{array}
$$
Moreover, we can write 
\begin{align}
M_1(N)\displaystyle\sum_{|k|>N}\left(\dfrac{R}{S}\right)^{k} &= M_1(N)\displaystyle\sum_{k=N+1}^{\infty}\dbinom{n+k-1}{k}\left(\dfrac{R}{S}\right)^{k}, \nonumber \\
& ~\triangleq~ M_1(N)\displaystyle\sum_{k=N+1}^{\infty}\left(\!\!{n\choose k}\!\!\right)\left(\dfrac{R}{S}\right)^{k}\label{binomial}.
\end{align} 

\vspace{-0.15cm}
\noindent Since the generating function of the multiset coefficient leads to \vspace{-0.1cm}
$$
\displaystyle\sum_{k=0}^{\infty}\left(\!\!{n\choose k}\!\!\right)\left(\dfrac{R}{S}\right)^{k} = \left(\dfrac{1}{1-R/S}\right)^n,\vspace{0.1cm}
$$
we finally obtain \vspace{-0.2cm}
\begin{align*}
& M_1(N)\displaystyle\sum_{|k|>N}\left(\dfrac{R}{S}\right)^{k} \\
& \quad = M_1(N)\left[\left(\dfrac{1}{1-R/S}\right)^n - \displaystyle\sum_{k=0}^{N}\left(\!\!{n\choose k}\!\!\right)\left(\dfrac{R}{S}\right)^{k}\right].
\end{align*} \vspace{-0.5cm}

\end{proof}
Finally, we present another result which is similar in spirit to Proposition \ref{thm:error_phi2}, but relies on the Cauchy-Schwarz inequality. 
\begin{prop}\label{thm:error_phi3}
Let $\phi_{\lambda}$ be an analytic function over $\D^n(S)$ for some $S>0$ such that its Taylor coefficients satisfy
$
\sum_{|k|>N}|c_k|^2S^{2k} < M_2(N) \in \R^+.
$
Then, for all $x\in\D^n(R)$ with $0<R<S$,
\begin{equation}\label{eq:error_phi3}
\begin{split}
|&\phi_{\lambda}(x)-(P_N\phi_{\lambda})(x)|\\
& < M_2(N)\sqrt{\left(\dfrac{1}{1-(R/S)^2}\right)^n - \displaystyle\sum_{k=0}^{N}\left(\!\!{n\choose k}\!\!\right)\left(\dfrac{R}{S}\right)^{2k}}.
\end{split}
\end{equation}
\end{prop}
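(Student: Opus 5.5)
We follow the proof of Proposition~\ref{thm:error_phi2}, but replace the termwise sup bound on the coefficients with the Cauchy--Schwarz inequality on the tail of the series. As before, for $x\in\D^n(R)$ we start from
$$
|\phi_{\lambda}(x)-(P_N\phi_{\lambda})(x)| = \Big|\sum_{|k|>N} c_k x^k\Big| \leq \sum_{|k|>N}|c_k|\,R^{|k|}.
$$
Here $|x^k|\le R^{|k|}$ because $|x_i|<R$ for each $i$; we write $R^k$ for $R^{|k|}$ as in the paper. We insert the factor $(S/S)^k$ and split each term as $\big(|c_k|S^k\big)\cdot\big((R/S)^k\big)$.

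Cauchy--Schwarz over the index set $\{k\in\N_0^n : |k|>N\}$ then gives
$$
\sum_{|k|>N}|c_k|S^k\Big(\frac{R}{S}\Big)^{k} \leq \Big(\sum_{|k|>N}|c_k|^2S^{2k}\Big)^{1/2}\Big(\sum_{|k|>N}\Big(\frac{R}{S}\Big)^{2k}\Big)^{1/2}.
$$
By hypothesis the first factor is strictly less than $\sqrt{M_2(N)}$. For the second factor we repeat the counting step~\eqref{binomial}. The number of multi-indices with $|k|=m$ is the multiset coefficient $\left(\!\!{n\choose m}\!\!\right)$, so the second sum equals $\sum_{m=N+1}^\infty \left(\!\!{n\choose m}\!\!\right) q^m$ with $q=(R/S)^2<1$. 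The generating-function identity used in Proposition~\ref{thm:error_phi2}, now with $R/S$ replaced by $q$, gives $\sum_{m\ge0}\left(\!\!{n\choose m}\!\!\right)q^m=(1-q)^{-n}$. Subtracting the terms with $m\le N$ produces exactly the expression under the square root in~\eqref{eq:error_phi3}.

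Both series converge absolutely: the second because $R<S$, the first by hypothesis. Hence all rearrangements into sums over $|k|=m$ are legitimate, and the strict inequality carries through.

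The main obstacle is reconciling the constant. The argument above naturally produces $\sqrt{M_2(N)}$ in front of the square root, not $M_2(N)$. These agree with the stated form~\eqref{eq:error_phi3} only if the hypothesis is read as a bound on $\big(\sum_{|k|>N}|c_k|^2S^{2k}\big)^{1/2}$, or if $M_2(N)\ge1$, since then $\sqrt{M_2}\le M_2$. I would state the proof with this interpretation made explicit, i.e.\ $M_2(N)$ bounds the weighted $\ell^2$ norm of the tail coefficients. If the hypothesis is meant literally as a bound on the squared sum, I would note that the correct prefactor is $\sqrt{M_2(N)}$, and that the proof is otherwise identical.
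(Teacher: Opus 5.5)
Your argument is the paper's proof. You bound the tail by $\sum_{|k|>N}|c_k|S^{k}(R/S)^{k}$, apply Cauchy--Schwarz over $|k|>N$, and evaluate $\sum_{|k|>N}(R/S)^{2k}$ with the multiset generating function exactly as in Proposition~\ref{thm:error_phi2}.

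Your concern about the constant is justified, and you can state it without hedging. The paper's own chain ends at $|\phi_{\lambda}-P_N\phi_{\lambda}|^2\le M_2(N)\sum_{|k|>N}(R/S)^{2k}$, so it too yields only the prefactor $\sqrt{M_2(N)}$.

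The version with $M_2(N)$ is false in general. Replacing $\phi_{\lambda}$ by $t\phi_{\lambda}$ makes $t^2M_2(N)$ admissible in the hypothesis. Letting $t\to0$ in the claimed bound would then force $\phi_{\lambda}=P_N\phi_{\lambda}$ on $\D^n(R)$; for example, $\phi_{\lambda}(x)=t\,x_1^{N+1}$ with $t$ small violates it. So the correct statement carries $\sqrt{M_2(N)}$. Equivalently, $M_2(N)$ should bound the weighted $\ell^2$ norm of the tail rather than its square.
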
 
\begin{proof}
We have
\begin{align*}
\left|\phi_{\lambda}- P_N\phi_{\lambda}\right|^2 &\leq \left(\displaystyle\sum_{|k|>N}|c_{k}||x|^k\right)^2,\\
 &\leq  \left(\displaystyle\sum_{|k|>N}|c_{k}|S^{k}\left(\dfrac{R}{S}\right)^{k}\right)^2,\\
 &\leq  
\left(\displaystyle\sum_{|k|>N}|c_k|^2S^{2k}\right)\left(\displaystyle\sum_{|k|>N}\left(\dfrac{R}{S}\right)^{2k}\right),\\
 &\leq  M_2(N)\left(\displaystyle\sum_{|k|>N}\left(\dfrac{R}{S}\right)^{2k}\right),
\end{align*}
where we used the Cauchy-Schwarz inequality. The rest of proof follows similarly as in the proof of Proposition \ref{thm:error_phi2}.
\end{proof}

\begin{remark}[Estimation of the domain of analyticity] An eigenfunction $\phi_\lambda$ is analytic over $\D^n(S)$ for some value $S\in\R^+_0$ which is unknown in practice. We can overcome this issue by relying on the Cauchy–Hadamard theorem, which ensures that $\phi_\lambda$ converges with radius $\rho:= (\rho_1,\ldots,\rho_n)$ if and only if $\limsup_{|k|\rightarrow\infty}\sqrt[|k|]{|c_k|\rho^{k}} = 1$. Note that, for one dimensional power series, the radius of convergence $\rho$ satisfies 
    \begin{equation}\label{radius_1D}
    \dfrac{1}{\rho} = \limsup_{k\rightarrow\infty}\left(|c_k|^{1/k}\right).
    \end{equation}
In order to estimate the radius of analyticity, we can compute the Taylor coefficients for $|k|< N$ with $N \gg 1$ and evaluate the quantity 
\begin{equation}\label{coeff_pratique}
\left|1-\max_{|k|\leq N}\sqrt[|k|]{|c_k|\rho^k} \right|
\end{equation} 
for different values of $\rho \in \R_+^n$. The radius of convergence is given by the value $\rho$ which minimizes \eqref{coeff_pratique} and, in particular, the radius $S$ of $\D^n(S)$ can be chosen as $S=\min_i\rho_i$.
\end{remark}

\section{Error bounds on the Lyapunov function and application to stability}\label{stab_sec} 

Propositions \ref{thm:error_phi1} - \ref{thm:error_phi3} characterize the approximation error on $\phi_{\lambda_i}$ for any $i=1,\ldots,n$. We now investigate how this error propagates to the approximation of the Lyapunov function $V$ and its time derivative $\dot{V}$. This will be useful to obtain an inner estimation of the basin of attraction of the equilibrium point in Section \ref{sec:application_stab}. 
\subsection{Error bound on $|V-\widetilde{V}|$}
We consider the general case where the eigenvalues $\lambda_i $ and eigenfunctions $\phi_{\lambda_i}$ are complex-valued for any $i\in\{1,\ldots,n\}$. Let $\Re(v)$ and $\Im(v)$ be respectively the real and imaginary part of a complex number $v\in\mathbb{C}$. For the sake of readability, we will denote $\Re(\phi_{\lambda_i}) = \phi_{\lambda_i}^R$, $\Im(\phi_{\lambda_i}) = \phi_{\lambda_i}^I$,  $\Re(\lambda_i) = \lambda_i^R$, and $\Im(\lambda_i) = \lambda_i^I$.  

The candidate Lyapunov function obtained with the approximate eigenfunctions is given by
\begin{equation}\label{V_approx}
\widetilde{V} = \sum_{i=1}^n|\widetilde{\phi}_{\,\widetilde{\lambda}_i}|^2 = \sum_{i=1}^n|P_N\phi_{\lambda_i}|^2.
\end{equation}
We have 
\setlength\arraycolsep{1pt}
\begin{align}
|V&-\widetilde{V}| = \Big|\sum_{i=1}^n\left(|\phi_{\lambda_i}|-|P_N\phi_{\lambda_i}|\right)\left(|\phi_{\lambda_i}|+|P_N\phi_{\lambda_i}|\right)\Big|,\nonumber\\
& \leq \sum_{i=1}^n\Big||\phi_{\lambda_i}|-|P_N\phi_{\lambda_i}|\Big|\Big||\phi_{\lambda_i}|-|P_N\phi_{\lambda_i}|+2|P_N\phi_{\lambda_i}|\Big|,\nonumber\\
&\leq 
\sum_{i=1}^n 2|P_N\phi_{\lambda_i}||\phi_{\lambda_i}-P_N\phi_{\lambda_i}|+|\phi_{\lambda_i}-P_N\phi_{\lambda_i}|^2\nonumber\\
&\triangleq \sum_{i=1}^n B_i(N).\label{error_bound_V} 
\end{align}

\noindent 
\begin{remark}\label{rem}
Assume that the eigenfunctions $\phi_{\lambda_i}$ are analytic over $\mathbb{D}^n(S_i)$ and satisfy the assumptions of Propositions 1, 2, or 3, with $R_i < S_i$. Then, the  we have that $|V-\widetilde{V}|$ converges to zero over $\mathbb{D}^n(R)$ as $N \rightarrow \infty$ where $R = \min_i R_i$ and $\mathbb{D}^n(S)$ with $S = \min_i S_i$ is the joint domain of analyticity.\end{remark}

\subsection{Error bound on $|\dot{V}-\dot{\widetilde{V}}|$} 

We first observe that  
\begin{equation}\label{dot_V_expansion}
\dot{V} = \mathcal{L}\left(\sum_{i=1}^n |\phi_{\lambda_i}|^2\right) = \sum_{i=1}^n \mathcal{L}|\phi_{\lambda_i}|^2 = 2\sum_{i=1}^n \lambda_i^R|\phi_{\lambda_i}|^2,
\end{equation}
where the last inequality follows from the fact that $\phi_{\lambda}$ is an eigenfunction of $\mathcal{L}$ \cite{c6}. Similarly, we have
\setlength\arraycolsep{1pt}
\begin{equation}
\label{dot_V_tilde_expansion}
\begin{split}
\dot{\widetilde{V}} &= \mathcal{L}\left(\displaystyle\sum_{i=1}^n |P_N\phi_{\lambda_i}|^2\right),\\
&= \displaystyle\sum_{i=1}^n \mathcal{L} \left(\Re(P_N\phi_{\lambda_i})^2+\Im(P_N\phi_{\lambda_i})^2\right),\\
& =  2\displaystyle\sum_{i=1}^n \left(P_N\phi^R_{\lambda_i}\mathcal{L}P_N\phi^R_{\lambda_i}+ P_N\phi^I_{\lambda_i}\mathcal{L}P_N\phi^I_{\lambda_i} \right).
\end{split}
\end{equation}
We are now in position to characterize the approximation error bound on the time derivative $\dot{V}$.
\begin{prop} Let $V$ and $\widetilde{V}$ be defined by \eqref{TrueLyap} and \eqref{V_approx} and assume that $\{\phi_{\lambda_i}\}_{i=1}^n$ are analytic over $\D^n(S)$. If $|P_N\phi^R_{\lambda_i}| < M_i$ and $|P_N\phi^I_{\lambda_i}| < K_i$ with $K_i, M_i > 0$ for all $i = 1,\ldots,n$, then 
\begin{align}
|\dot{V}-\dot{\widetilde{V}}| ~<~ &2\sum_{i=1}^n|\lambda_i^R|B_i(N) + M_i|(P_N-I)\mathcal{L}P_N\phi_{\lambda_i}^R|\nonumber\\ 
& \quad \quad + K_i|(P_N-I)\mathcal{L}P_N\phi_{\lambda_i}^I|.\label{error_dotV}  
\end{align}
\end{prop}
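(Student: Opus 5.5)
We propose to compare the two expansions \eqref{dot_V_expansion} and \eqref{dot_V_tilde_expansion} term by term. The idea is to insert and subtract the quantity $2\sum_i \lambda_i^R |P_N\phi_{\lambda_i}|^2$, which splits the error into a part controlled by $B_i(N)$ and a part that only involves the projection error of $\mathcal{L}$ acting on polynomials.

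\emph{Step 1: the eigenfunction part.} Since $\mathcal{L}$ is a real operator, $\mathcal{L}\phi_{\lambda_i}=\lambda_i\phi_{\lambda_i}$ gives
\[
\mathcal{L}\phi^R_{\lambda_i}=\lambda_i^R\phi^R_{\lambda_i}-\lambda_i^I\phi^I_{\lambda_i},\qquad \mathcal{L}\phi^I_{\lambda_i}=\lambda_i^I\phi^R_{\lambda_i}+\lambda_i^R\phi^I_{\lambda_i}.
\]
The truncation $P_N$ acts coefficientwise on real and imaginary parts, so it commutes with taking $\Re$ and $\Im$. By \eqref{eq:PLP} and $P_N\phi=\phi$ on polynomials, $P_N\mathcal{L}P_N\phi_{\lambda_i}=P_N\mathcal{L}\phi_{\lambda_i}=\lambda_i P_N\phi_{\lambda_i}$. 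Taking real and imaginary parts yields
\[
P_N\mathcal{L}P_N\phi^R_{\lambda_i}=\lambda_i^R P_N\phi^R_{\lambda_i}-\lambda_i^I P_N\phi^I_{\lambda_i},
\]
and the analogous identity for $\phi^I_{\lambda_i}$.

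\emph{Step 2: decompose.} In \eqref{dot_V_tilde_expansion}, write $\mathcal{L}P_N\phi^{R}_{\lambda_i}=P_N\mathcal{L}P_N\phi^{R}_{\lambda_i}-(P_N-I)\mathcal{L}P_N\phi^{R}_{\lambda_i}$, and do the same for the imaginary part. Substituting the identities of Step 1, the cross terms in $\lambda_i^I$ cancel:
\[
-\lambda_i^I P_N\phi^R_{\lambda_i} P_N\phi^I_{\lambda_i}+\lambda_i^I P_N\phi^I_{\lambda_i}P_N\phi^R_{\lambda_i}=0.
\]
This gives
\[
\dot{\widetilde V}=2\sum_i\Big(\lambda_i^R|P_N\phi_{\lambda_i}|^2 - P_N\phi^R_{\lambda_i}(P_N-I)\mathcal{L}P_N\phi^R_{\lambda_i}-P_N\phi^I_{\lambda_i}(P_N-I)\mathcal{L}P_N\phi^I_{\lambda_i}\Big).
\]

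\emph{Step 3: bound.} Subtracting from \eqref{dot_V_expansion} gives
\[
\dot V-\dot{\widetilde V}=2\sum_i\lambda_i^R\big(|\phi_{\lambda_i}|^2-|P_N\phi_{\lambda_i}|^2\big)+2\sum_i\big(\cdots\big).
\]
The first difference is bounded by $B_i(N)$, exactly as in the derivation of \eqref{error_bound_V}. The remaining products are bounded using $|P_N\phi^R_{\lambda_i}|<M_i$ and $|P_N\phi^I_{\lambda_i}|<K_i$, and the triangle inequality then yields \eqref{error_dotV}. Carried out this way, the last two terms come with a factor $2$, which is consistent with reading \eqref{error_dotV} as having the factor $2$ multiply the whole sum. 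We would check this factor carefully against the paper's proof.

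\textbf{Main obstacle.} The delicate step is Step 1. It requires justifying that $P_N$ commutes with real and imaginary parts and with the identity \eqref{eq:PLP}, which holds because $\mathcal{L}$ maps monomials of degree $m$ into series whose terms have degree $\ge m$ (using $F(0)=0$). It also requires verifying that the $\lambda_i^I$ cross terms cancel exactly.
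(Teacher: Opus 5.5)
Your proof is correct and matches the paper's argument: add and subtract $P_N\phi^{R}_{\lambda_i}\,P_N\mathcal{L}P_N\phi^{R}_{\lambda_i}$ (and the imaginary analogue), use \eqref{eq:PLP} to get $P_N\mathcal{L}P_N\phi_{\lambda_i}=\lambda_iP_N\phi_{\lambda_i}$, take real and imaginary parts, cancel the $\lambda_i^I$ cross terms, and conclude with the triangle inequality and the definition of $B_i(N)$. Your reading of the factor $2$ as multiplying the whole sum agrees with the paper's own derivation, and like the paper's, your argument only yields a non-strict inequality: both sides vanish at $x=0$, so the stated strict inequality fails there.
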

\begin{proof}
From \eqref{dot_V_expansion} and \eqref{dot_V_tilde_expansion}, we can write
\begin{align}
\addtolength{\jot}{2em}
\dot{V} - \dot{\widetilde{V}} &= 
2\sum_{i=1}^n \lambda_i^R|\phi_{\lambda_i}|^2-P_N\phi_{\lambda_i}^R\mathcal{L}P_N\phi_{\lambda_i}^R\nonumber\\
&\qquad -P_N\phi_{\lambda_i}^I\mathcal{L}P_N\phi_{\lambda_i}^I.\nonumber
\end{align}
By adding and subtracting the two terms $P_N\phi_{\lambda_i}^R(P_N\mathcal{L}P_N\phi_{\lambda_i}^R)$ and $P_N\phi_{\lambda_i}^I(P_N\mathcal{L}P_N\phi_{\lambda_i}^I)$, we obtain 
\begin{align}
\dot{V} - \dot{\widetilde{V}}& = 2\sum_{i=1}^n \lambda_i^R|\phi_{\lambda_i}|^2 + P_N\phi_{\lambda_i}^R(P_N\mathcal{L}P_N\phi_{\lambda_i}^R-\mathcal{L}P_N\phi_{\lambda_i}^R)\nonumber\\
&\qquad+P_N\phi_{\lambda_i}^I(P_N\mathcal{L}P_N\phi_{\lambda_i}^I-\mathcal{L}P_N\phi_{\lambda_i}^I)\nonumber\\[0.2cm]
& \qquad - P_N\phi_{\lambda_i}^R(P_N\mathcal{L}P_N\phi_{\lambda_i}^R) - P_N\phi_{\lambda_i}^I(P_N\mathcal{L}P_N\phi_{\lambda_i}^I).\nonumber
\end{align}
It follows from \eqref{eq:PLP} that $P_N\mathcal{L}P_N\phi_{\lambda_i} = P_N\lambda_i \phi_{\lambda_i}$ and taking the real and complex parts of this equality, we obtain 
\begin{eqnarray*}
    P_N\mathcal{L}P_N\phi_{\lambda_i}^R & = & \lambda_i^RP_N\phi_{\lambda_i}^R-\lambda_i^IP_N\phi_{\lambda_i}^I\\
    P_N\mathcal{L}P_N\phi_{\lambda_i}^I & = & \lambda_i^RP_N\phi_{\lambda_i}^I+\lambda_i^IP_N\phi_{\lambda_i}^R.
\end{eqnarray*}
We thus have 
\begin{align}
\dot{V} - \dot{\widetilde{V}} & = 2\sum_{i=1}^n \lambda_i^R|\phi_{\lambda_i}|^2 + P_N\phi_{\lambda_i}^R(P_N\mathcal{L}P_N\phi_{\lambda_i}^R-\mathcal{L}P_N\phi_{\lambda_i}^R)\nonumber\\
&\qquad +P_N\phi_{\lambda_i}^I(P_N\mathcal{L}P_N\phi_{\lambda_i}^I-\mathcal{L}P_N\phi_{\lambda_i}^I)\nonumber\\[0.2cm]
&\qquad-P_N\phi_{\lambda_i}^R(\lambda_i^RP_N\phi_{\lambda_i}^R-\lambda_i^IP_N\phi_{\lambda_i}^I)\nonumber\\[0.2cm]
&\qquad-P_N\phi_{\lambda_i}^I(\lambda_i^RP_N\phi_{\lambda_i}^I+\lambda_i^IP_N\phi_{\lambda_i}^R),\nonumber
\end{align}

\begin{align}
& = 2\sum_{i=1}^n \lambda_i^R(|\phi_{\lambda_i}|^2-|P_N\phi_{\lambda_i}|^2)\nonumber\\[0.1cm]
& \qquad + P_N\phi_{\lambda_i}^R(P_N\mathcal{L}P_N\phi_{\lambda_i}^R-\mathcal{L}P_N\phi_{\lambda_i}^R)\nonumber\\[0.2cm]
&\qquad+P_N\phi_{\lambda_i}^I(P_N\mathcal{L}P_N\phi_{\lambda_i}^I-\mathcal{L}P_N\phi_{\lambda_i}^I)\nonumber
\end{align}
Finally, the result follows from the triangle inequality and the definition of $B_i(N)$ in \eqref{error_bound_V}.
\end{proof}
\noindent We note that the two terms 
$$
|(P_N-I)\mathcal{L}P_N\phi_{\lambda_i}^R|
~\text{ and }~
|(P_N-I)\mathcal{L}P_N\phi_{\lambda_i}^I|
$$
appearing in \eqref{error_dotV} are known functions that can be computed for a fixed value $N$. Moreover, they correspond to the error between the analytic functions $\mathcal{L}P_N\phi_{\lambda_i}^R$ and $\mathcal{L}P_N\phi_{\lambda_i}^I$, respectively, and their truncation. Since this error converges to zero as $N \rightarrow \infty$, we have that $|\dot{V}-\dot{\widetilde{V}}|$ also converges to zero as $N \rightarrow \infty$ following similar lines as in Remark \ref{rem}.

\subsection{Estimation of the region of attraction}\label{sec:application_stab}

An inner approximation of the region of attraction of an equilibrium point can be computed as the largest level set of the candidate Lyapunov function $\dot{\widetilde{V}}$ that lie in the \textit{validity region} $\{x\in\X ~|~ \dot{\widetilde{V}} < 0\}$. Using the error bounds \eqref{error_bound_V} and \eqref{error_dotV} on $\widetilde{V}$ and $\dot{\widetilde{V}}$, we can derive sufficient conditions on the level set of $\widetilde{V}$ to guarantee a valid inner approximation of the region of attraction.
\begin{prop}\label{th_ROA} Let $V$ and $\widetilde{V}$ be defined by \eqref{TrueLyap} and \eqref{V_approx} over a region $\D^n(S)$ where $\{\phi_{\lambda_i}\}_{i=1}^n$  are analytic. If for all \mbox{$x\in D \subset \X$}, \mbox{$|V(x)-\widetilde{V}(x)|<\varepsilon_1$}, $|\dot{V}(x)-\dot{\widetilde{V}}(x)|<\varepsilon_2$ and
\begin{equation*}\label{ROA}
\Omega = \left\{x\in D \mid \gamma_1 < \widetilde{V}(x) < \gamma_2\right\}
\end{equation*} 
is not empty, then $\Omega$ is an inner approximation of the ROA of \eqref{eq} with $\gamma_1 = \left(-\varepsilon_2-2|\lambda_m|\varepsilon_1\right)/2\lambda_m$, $\lambda_m \triangleq \max_i \lambda_i^R$, and where $\gamma_2$ is the largest level set of $\widetilde{V}$ lying in $D$.
\end{prop}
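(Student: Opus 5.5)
We argue by showing that $\dot{\widetilde V}<0$ on $\Omega$, so that $\Omega$, which is bounded above by a level set of $\widetilde V$, is forward invariant and sits inside the validity region. The key steps are as follows.

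\textbf{Step 1: a bound on $\dot V$ in terms of $V$.} From \eqref{dot_V_expansion}, and since each $\lambda_i^R\le\lambda_m<0$, we have
\[
\dot V=2\sum_{i=1}^n\lambda_i^R|\phi_{\lambda_i}|^2\le 2\lambda_m V .
\]

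\textbf{Step 2: transfer to the approximations.} For $x\in D$ the hypotheses give $\dot{\widetilde V}<\dot V+\varepsilon_2$ and $V>\widetilde V-\varepsilon_1$. Because $\lambda_m<0$, this yields
\[
\dot{\widetilde V}<\varepsilon_2+2\lambda_m(\widetilde V-\varepsilon_1)=\varepsilon_2+2|\lambda_m|\varepsilon_1+2\lambda_m\widetilde V .
\]
The right-hand side is negative exactly when $\widetilde V>(\varepsilon_2+2|\lambda_m|\varepsilon_1)/(2|\lambda_m|)$. With $\lambda_m=-|\lambda_m|$, this threshold equals $(-\varepsilon_2-2|\lambda_m|\varepsilon_1)/(2\lambda_m)=\gamma_1$. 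Hence $\dot{\widetilde V}<0$ on $\Omega$.

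\textbf{Step 3: a Lyapunov argument on the sublevel set.} Since $\gamma_2$ is the largest level of $\widetilde V$ whose sublevel set stays in $D$, the set $\{x:\widetilde V(x)<\gamma_2\}$ lies in $D$. Take a trajectory starting in $\Omega$. As long as it remains in $\Omega$, $\widetilde V$ strictly decreases along it, so it cannot reach the level $\gamma_2$. It therefore either stays in $\Omega$ forever or enters the sublevel set $\{\widetilde V\le\gamma_1\}$.

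In the first case, $\widetilde V$ is decreasing and bounded below, and $\dot{\widetilde V}$ is bounded away from zero on the compact closure of the region where $\widetilde V$ lies between its limit and $\gamma_2$. This contradicts convergence of $\widetilde V$, so the trajectory must reach $\{\widetilde V\le\gamma_1\}$.

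Once there, we use $V<\widetilde V+\varepsilon_1\le\gamma_1+\varepsilon_1$. Together with $\dot V\le 2\lambda_m V$ and the fact that $V$ is a true Lyapunov function on $D$ (by \cite{c3}), this shows that the trajectory is contained in a sublevel set of $V$ inside $D$, and hence converges to the origin.

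\textbf{Main obstacle.} The delicate point is Step 3. The inner region $\{\widetilde V\le\gamma_1\}$ is not certified by $\dot{\widetilde V}<0$, so we must argue that trajectories entering it do not escape back out of $D$. I would first try to use $V$ itself there, showing that $\{V\le\gamma_1+\varepsilon_1\}\subset\{\widetilde V<\gamma_2\}\subset D$ as long as $\gamma_1+2\varepsilon_1<\gamma_2$. Forward invariance of this sublevel set of $V$, guaranteed by $\dot V\le 2\lambda_m V<0$, then gives convergence to the origin. The non-emptiness hypothesis on $\Omega$ is what I expect to secure this compatibility of levels, possibly after a slight strengthening of the assumption.
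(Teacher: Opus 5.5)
Your Steps 1 and 2 are the paper's argument: the same chain $\dot{\widetilde V}\le \dot{\widetilde V}-\dot V+2\lambda_m(V-\widetilde V)+2\lambda_m\widetilde V\le\varepsilon_2+2|\lambda_m|\varepsilon_1+2\lambda_m\widetilde V$ and the same threshold $\gamma_1$. The paper then ends with one sentence: trajectories in $\Omega$ are mapped into $E_{\gamma_1}$, a neighborhood of the origin. You are right that more must be argued. However, your Step 3 does not supply it under the stated hypotheses. Your claim that, once in $\{\widetilde V\le\gamma_1\}$, the trajectory stays in a sublevel set of $V$ inside $D$ rests on $\{V\le\gamma_1+\varepsilon_1\}\subset\{\widetilde V<\gamma_2\}$, i.e. on $\gamma_1+2\varepsilon_1<\gamma_2$. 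Non-emptiness of $\Omega$ only gives $\gamma_1<\gamma_2$, so as written you prove a weaker proposition with a strengthened hypothesis.

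The missing observation is that the region you call uncertified cannot be left. The hypotheses are strict inequalities, so your Step 2 bound gives $\dot{\widetilde V}<0$ at every $x\in D$ with $\gamma_1\le\widetilde V(x)<\gamma_2$, including on the level $\widetilde V=\gamma_1$ itself. To leave $\{\widetilde V\le\gamma_1\}$, or $\{\widetilde V<\gamma_2\}$, $\widetilde V$ would have to increase through values in $[\gamma_1,\gamma_2)$ at points of $D$, which is impossible. Both sets are therefore forward invariant, whatever the sign of $\dot{\widetilde V}$ inside $E_{\gamma_1}$.

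Consequently a trajectory from $x_0\in\Omega$ stays for all $t\ge 0$ in the compact set $\{\widetilde V\le\widetilde V(x_0)\}\subset D$. There the eigenrelation gives $V(\varphi^t(x_0))\le e^{2\lambda_m t}V(x_0)\to 0$, so every $\omega$-limit point satisfies $V=0$. Positive definiteness of $V$ on $D$ then forces $\varphi^t(x_0)\to 0$; you take this from the Koopman construction, and the paper uses it implicitly. No strengthening of the assumption is needed, and your case distinction becomes unnecessary. If you keep that distinction, run the compactness argument on $\{L\le\widetilde V\le\widetilde V(x_0)\}$ rather than up to the level $\gamma_2$, since the error bounds are only assumed on $D$.
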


\begin{proof}
It follows from \eqref{dot_V_expansion} that
$$
\dot{V} = 2\sum_{i=1}^n \lambda_i^R|\phi_{\lambda_i}|^2 < 2\lambda_m V
$$
and we have
\begin{equation*}
\begin{split}
\dot{\widetilde{V}} \leq  \dot{\widetilde{V}} - \dot{V} + 2\lambda_m V & = \dot{\widetilde{V}} - \dot{V} + 2\lambda_m(V-\widetilde{V}) + 2\lambda_m \widetilde{V} \\
& \leq \varepsilon_2 + 2|\lambda_m| \varepsilon_1 + 2\lambda_m \widetilde{V} .
\end{split}
\end{equation*}
\noindent Hence, $\dot{\widetilde{V}} < 0$ if $\widetilde{V} > \gamma_1 \triangleq (-\varepsilon_2-2|\lambda_m|\varepsilon_1)/2\lambda_m$.
Since $0\in E_{\gamma_1} :=\{x\in D~\mid~\widetilde{V}(x)<\gamma_1\}$, any trajectory in $\Omega$ is mapped to a neighborhood $E_{\gamma_1}$ of the origin, which concludes the proof.
\end{proof}
As $N\rightarrow \infty$, we have $\gamma_1 \rightarrow 0$ so that the convergence to the equilibrium is captured accurately. However, the value $\gamma_2$ converges to a constant value, and so does size of $\Omega$. Instead, a larger size of $\Omega$ can be obtained by increasing $R$, which will also increase $\gamma_1$ and therefore require an increase of $N$ for better accuracy.

\subsection{Alternative result with a surrogate system}

We can also obtain an alternative result to Proposition \ref{th_ROA} by considering a surrogate system. Assume now that there exists a vector field $\widetilde{F}$ such that  $P_N\phi_{\lambda_i}$ is an eigenfunction of $\widetilde{\mathcal{L}} = \widetilde{F} \cdot \nabla$ associated with the dynamics $\dot{x}=\widetilde{F}(x)$. In this case, the candidate $\widetilde{V}$ is a \textit{true} Lyapunov function for that surrogate system. We can then envision that the knowledge of an error bound on $|F-\widetilde{F}|$ is sufficient to provide stability guarantees for the original system $\dot{x}=F(x)$. This is summarized in the following proposition. 
\begin{prop}\label{vector_field}
Let $[\mathbf{J}_\phi(x)]_{ij} := \frac{\partial P_N\phi_{\lambda_i}}{\partial x_j}(x)$ and assume that $\mathbf{J}_\phi(x)$ is invertible for any $x\in D \subset \X$. Then, $P_N\phi_{\lambda_i}$ is an eigenfunction of $\widetilde{\mathcal{L}} = \widetilde{F} \cdot \nabla$ with 
$$\widetilde{F}(x) = \mathbf{J}_\phi(x)^{-1}[\lambda_1P_N\phi_{\lambda_1}(x),\ldots,\lambda_nP_N\phi_{\lambda_n}(x)]^{\top}.$$ Moreover, if $|F_i-\widetilde{F}_i|(x) < \varepsilon_i$, $\max_{x\in D}\left|\frac{\partial \widetilde{V}}{\partial x_i}\right|<\delta_i$ with $\varepsilon_i,\delta_i > 0$ for any $i = 1,\ldots,n$, and
$$
\Omega = \left\{x\in D \mid \sum_i \dfrac{\varepsilon_i\delta_i}{2|\lambda_m|} < \widetilde{V}(x) < \gamma_2\right\}
$$
is not empty, then $\Omega$ is an inner approximation of the ROA of \eqref{eq} with $\lambda_m = \max_i \lambda_i^R$ and $\gamma_2$ the largest level set of $\widetilde{V}$ lying in $D$.

\end{prop}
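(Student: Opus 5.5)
The plan has two parts: first verify the eigenfunction claim by a direct computation, then run the same level-set argument as in Proposition \ref{th_ROA}, with the vector-field error in place of the derivative error.

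\emph{Eigenfunction property.} For each $i$ we have $\widetilde{\mathcal{L}} P_N\phi_{\lambda_i} = \widetilde{F}\cdot\nabla P_N\phi_{\lambda_i}$. The gradient $\nabla P_N\phi_{\lambda_i}$ is the $i$-th row of $\mathbf{J}_\phi$, so stacking over $i$ gives
\[
\big[\widetilde{\mathcal{L}}P_N\phi_{\lambda_1},\ldots,\widetilde{\mathcal{L}}P_N\phi_{\lambda_n}\big]^\top=\mathbf{J}_\phi(x)\widetilde{F}(x).
\]
With the proposed $\widetilde{F}$ this equals $[\lambda_1P_N\phi_{\lambda_1},\ldots,\lambda_nP_N\phi_{\lambda_n}]^\top$, which is the eigenrelation. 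Invertibility of $\mathbf{J}_\phi$ on $D$ is exactly what makes $\widetilde{F}$ well defined there; it is smooth, being a rational function of polynomials with nonvanishing denominator. Complex eigenvalues are handled componentwise (or by real/imaginary splitting), and no further care is needed.

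\emph{Lyapunov decrease for the surrogate.} Since $P_N\phi_{\lambda_i}$ is an exact eigenfunction of $\widetilde{\mathcal{L}}$, the computation \eqref{dot_V_expansion} applies verbatim with $\widetilde{\mathcal{L}}$ in place of $\mathcal{L}$. This gives
\[
\widetilde{F}\cdot\nabla\widetilde{V}=2\sum_i\lambda_i^R|P_N\phi_{\lambda_i}|^2\le 2\lambda_m\widetilde{V}=-2|\lambda_m|\widetilde{V},
\]
using $\lambda_m<0$, which holds because the origin is hyperbolic stable.

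\emph{Decrease for the true system.} Along trajectories of \eqref{eq},
\[
\dot{\widetilde{V}}=F\cdot\nabla\widetilde{V}=\widetilde{F}\cdot\nabla\widetilde{V}+(F-\widetilde{F})\cdot\nabla\widetilde{V}\le -2|\lambda_m|\widetilde{V}+\sum_i\varepsilon_i\delta_i,
\]
where the last step uses $|F_i-\widetilde{F}_i|<\varepsilon_i$ and $|\partial_{x_i}\widetilde{V}|<\delta_i$. Hence $\dot{\widetilde{V}}<0$ whenever $\widetilde{V}>\sum_i\varepsilon_i\delta_i/(2|\lambda_m|)=:\gamma_1$.

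\emph{Conclusion.} This matches the final step of Proposition \ref{th_ROA}. The sublevel set $\{\widetilde{V}<\gamma_2\}$ lies in $D$ by the choice of $\gamma_2$, and $\widetilde{V}$ strictly decreases on $\Omega$, so trajectories starting in $\Omega$ cannot leave through the level $\gamma_2$. They therefore enter $E_{\gamma_1}=\{\widetilde{V}<\gamma_1\}$, a neighborhood of the origin, and $\Omega$ is an inner approximation in the same sense as Proposition \ref{th_ROA}.

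The main obstacle is making the last step rigorous. The argument tacitly needs $\widetilde{V}$ to be positive away from the origin, and needs the level sets to be compact inside $D$, so that entering the $\gamma_1$-sublevel set means approaching the equilibrium. I would handle this exactly as the proof of Proposition \ref{th_ROA} does, relying on its conventions for $E_{\gamma_1}$ and $\gamma_2$, rather than proving it anew.
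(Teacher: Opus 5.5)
Your proposal is correct and follows the paper's proof step for step. The eigenrelation is the linear system $\mathbf{J}_\phi(x)\widetilde{F}(x)=[\lambda_1P_N\phi_{\lambda_1}(x),\ldots,\lambda_nP_N\phi_{\lambda_n}(x)]^\top$, and the bound $\dot{\widetilde{V}}\le\sum_i\varepsilon_i\delta_i+2\lambda_m\widetilde{V}$ comes from adding and subtracting $\nabla\widetilde{V}^\top\widetilde{F}$, followed by the same $E_{\gamma_1}$ argument as in Proposition~\ref{th_ROA}. Your explicit computation $\widetilde{F}\cdot\nabla\widetilde{V}=2\sum_i\lambda_i^R|P_N\phi_{\lambda_i}|^2$ spells out a step the paper only asserts; it uses that $\widetilde{F}$ is real, which holds when the eigenfunctions come in conjugate pairs, so "no further care is needed" is slightly too quick, though the paper is equally silent on this. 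The concluding step you flag is left at the same informal level in the paper.
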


\begin{proof} For any $i\in\{1,\ldots,n\}$, $P_N\phi_{\lambda_i}$ is an eigenfunction of $\widetilde{\mathcal{L}}$ if $\widetilde{\mathcal{L}}P_N\phi_{\lambda_i} = \lambda_iP_N\phi_{\lambda_i}$. Thus, $\widetilde{F}$ should satisfy 
\begin{equation}\label{F^*}
\mathbf{J}_\phi(x)\widetilde{F} = 
[\lambda_1P_N\phi_{\lambda_1}(x),\ldots,\lambda_nP_N\phi_{\lambda_n}(x)]^{\top}
\end{equation}
Since $\mathbf{J}_\phi(x)$ is invertible, this completes the first part of the proof. Next, we observe that 
\begin{equation*}
\begin{split}
\dot{\widetilde{V}} = \nabla \widetilde{V}^{\top} F & = \nabla \widetilde{V}^{\top} F + \nabla \widetilde{V}^{\top} \widetilde{F} - \nabla \widetilde{V}^{\top} \widetilde{F} \\
& \leq \nabla \widetilde{V}^{\top}\left(F-\widetilde{F}\right) + 2\lambda_m\widetilde{V},\\
 & \leq  \sum_{i=1}^n \left|\dfrac{\partial \widetilde{V}}{\partial x_i}\right||F_i-F_i^*| + 2\lambda_m\widetilde{V},\\
  & \leq   \sum_{i=1}^n \delta_i\varepsilon_i + 2
  \lambda_m\widetilde{V}.
\end{split}
\end{equation*}

\noindent Then, $\dot{\widetilde{V}} < 0$ if $\widetilde{V} > \gamma_1 = \sum_i (\varepsilon_i\delta_i)/2|\lambda_m|$. 
Since $0\in E_{\gamma_1} :=\{x\in D~\mid~\widetilde{V}(x)<\gamma_1\}$, any trajectory in $\Omega$ is mapped to a neighborhood $E_{\gamma_1}$ of the origin, which concludes the proof.
\end{proof}

We note that the origin is an equilibrium point of $\widetilde{F}$ since $P_N\phi_{\lambda_i}(0) = 0$ for any $N\in\N_0$ and $i\in\{1,\ldots,n\}$. Moreover, it is characterized by the same local stability property as the equilibrium of $\widetilde{F}$, as shown in the following result.
\begin{prop} Let $\mathbf{J}_{{F}}(x)$ and $\mathbf{J}_{\widetilde{F}}(x)$ be the Jacobian matrices associated with $F$ and $\widetilde{F}$, respectively. If $\lambda$ is an eigenvalue of $\mathbf{J}_{{F}}(0)$, then it is also an eigenvalue of $\mathbf{J}_{\widetilde{F}}(0)$. 
\end{prop}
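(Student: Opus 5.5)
The plan is to linearize the defining identity $\mathbf{J}_\phi(x)\widetilde{F}(x) = \Lambda\,\Phi_N(x)$ at the origin. Here $\Lambda := \mathrm{diag}(\lambda_1,\ldots,\lambda_n)$ and $\Phi_N := [P_N\phi_{\lambda_1},\ldots,P_N\phi_{\lambda_n}]^\top$. Since $\Phi_N(0)=0$, the origin is an equilibrium of $\widetilde{F}$, as already noted. Differentiating $\mathbf{J}_\phi(x)\widetilde{F}(x)=\Lambda\Phi_N(x)$ and evaluating at $x=0$, the term carrying derivatives of $\mathbf{J}_\phi$ is multiplied by $\widetilde{F}(0)=0$ and vanishes. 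This leaves
\begin{equation*}
\mathbf{J}_\phi(0)\,\mathbf{J}_{\widetilde{F}}(0) = \Lambda\,\mathbf{J}_\phi(0).
\end{equation*}
Because $\mathbf{J}_\phi(0)$ is invertible by assumption (taking $0\in D$), we get $\mathbf{J}_{\widetilde{F}}(0)=\mathbf{J}_\phi(0)^{-1}\Lambda\,\mathbf{J}_\phi(0)$. Hence $\mathbf{J}_{\widetilde{F}}(0)$ is similar to $\Lambda$, and its spectrum is exactly $\{\lambda_1,\ldots,\lambda_n\}$.

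It then remains to identify $\{\lambda_i\}$ with $\sigma(\mathbf{J}_F(0))$. By the construction recalled in Section~\ref{preli}, the principal eigenfunctions $\phi_{\lambda_i}$ are associated with the eigenvalues $\lambda_i$ of $\mathbf{J}_F(0)$. To make this self-contained, I would linearize the true eigen-equation $\nabla\phi_{\lambda_i}^\top F = \lambda_i\phi_{\lambda_i}$ at $0$. Since $F(0)=0$ and $\phi_{\lambda_i}(0)=0$, this gives
\begin{equation*}
\nabla\phi_{\lambda_i}(0)^\top \mathbf{J}_F(0) = \lambda_i \nabla\phi_{\lambda_i}(0)^\top,
\end{equation*}
so $\nabla\phi_{\lambda_i}(0)$ is a left eigenvector of $\mathbf{J}_F(0)$ for $\lambda_i$. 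For $N\ge 1$, the linear part of $P_N\phi_{\lambda_i}$ coincides with that of $\phi_{\lambda_i}$. Therefore the rows of $\mathbf{J}_\phi(0)$ are exactly these left eigenvectors, and invertibility of $\mathbf{J}_\phi(0)$ forces them to be independent. It follows that $\mathbf{J}_\phi(0)\mathbf{J}_F(0)=\Lambda\mathbf{J}_\phi(0)$ as well, so $\mathbf{J}_F(0)$ and $\mathbf{J}_{\widetilde{F}}(0)$ are both similar to $\Lambda$, with the same conjugating matrix. In fact this shows $\mathbf{J}_{\widetilde{F}}(0)=\mathbf{J}_F(0)$, which is stronger than the claim.

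The main obstacle is a bookkeeping issue in the differentiation step: the product rule on $\mathbf{J}_\phi(x)\widetilde{F}(x)$ produces a term involving second derivatives of $\Phi_N$ contracted with $\widetilde{F}$. One must observe that this term vanishes at the origin precisely because $\widetilde{F}(0)=0$. The argument also needs $\widetilde{F}$ to be $C^1$ near $0$. This follows because $\mathbf{J}_\phi$ is polynomial and invertible on $D$, so its inverse is smooth there.
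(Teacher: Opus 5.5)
Your proof is correct and follows the paper's route. Like the paper, you differentiate the surrogate eigen-equation $\mathbf{J}_\phi(x)\widetilde{F}(x)=\Lambda\Phi_N(x)$ at the origin, use $\widetilde{F}(0)=0$ to kill the second-derivative term, and read off each $\lambda_i$ as an eigenvalue through the nonzero gradients $\nabla P_N\phi_{\lambda_i}(0)$, which are the rows (not the columns, as the paper writes) of $\mathbf{J}_\phi(0)$. Your matrix form, combined with the extra linearization of the true eigen-equation, goes further: it gives $\mathbf{J}_{\widetilde{F}}(0)=\mathbf{J}_\phi(0)^{-1}\Lambda\mathbf{J}_\phi(0)=\mathbf{J}_F(0)$, and hence equality of the spectra with multiplicities, which is strictly stronger than the stated claim.
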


\begin{proof}
For any eigenvalue $\lambda_i$ of $\mathbf{J}_{{F}}(0)$, it follows from \eqref{F^*} that 
$$
\sum_{k=1}^n\widetilde{F}_k\dfrac{\partial P_N\phi_{\lambda_i}}{\partial x_k} = \lambda_i P_N\phi_{\lambda_i}. 
$$

\noindent Differentiating this equality with respect to $x_j$ leads to
$$
\sum_{k=1}^n\dfrac{\partial \widetilde{F}_k}{\partial x_j}\dfrac{\partial P_N\phi_{\lambda_i}}{\partial x_k} + \widetilde{F}_k\dfrac{\partial^2 P_N\phi_{\lambda_i}}{\partial x_j\partial x_k} = \lambda_i \dfrac{\partial P_N\phi_{\lambda_i}}{\partial x_j}
$$
and evaluating at $0$ with $\widetilde{F}(0)=0$ yields \mbox{$\mathbf{J}_{\widetilde{F}}(0)^{\top} v_i = \lambda_i v_i$} where $v_i$ is the $i$th column of $\mathbf{J}_{\phi}(0)$.
This implies that $\lambda_i$ is an eigenvalue of $\mathbf{J}_{\widetilde{F}}(0)$. Note that the vectors $v_i$ are nonzero since $\mathbf{J}_{{\phi}}(0)$ is invertible.
\end{proof}
The above result is appealing since no information is required on the eigenfunction. However, the existence of the surrogate system might potentially suffer from invertibility issues related to the matrix $\mathbf{J}_{\phi}$. This requires further investigation that is left for future research.

\section{Numerical Application}\label{example}

In this section, we use the result of Proposition \ref{th_ROA} to estimate the region of attraction of two dynamical systems. For the sake of conciseness, we will only consider error bounds on the eigenfunctions from Propositions \ref{thm:error_phi1} and \ref{thm:error_phi2}. 

\paragraph{Example 1}

For illustrative purpose, we first consider the system $\dot{x} = -x+2x^2$ for $x\in[-1,1]$, which admits the equilibrium $x^*=0$ with the region of attraction $[-1,1/2[$. The only eigenvalue of $\mathbf{J}_F(0)$ is $\lambda = -1$ so there exists only one principal eigenfunction $\phi_{\lambda}$ which is real. The Taylor coefficients $c_k$ of $\phi_{\lambda}$ have been computed by exploiting the triangular structure of $\mathbf{L}_\ell$.  

\begin{figure}[b!]
\centering
\vspace{-0.4cm}
\subfigure[]{
\hspace{-0.4cm}
    \label{Fig1a}   \includegraphics[width=0.2\textwidth]{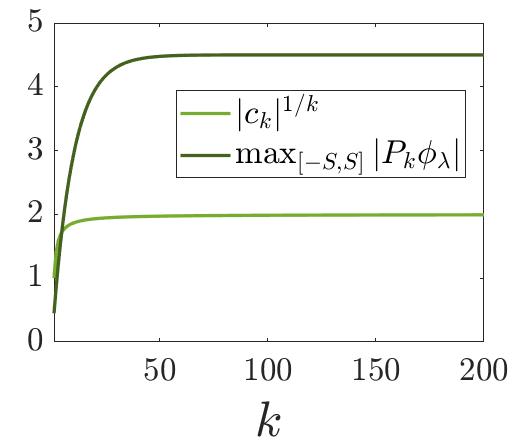}
}~\subfigure[]{
    \label{Fig1b}
\includegraphics[width=0.2\textwidth]{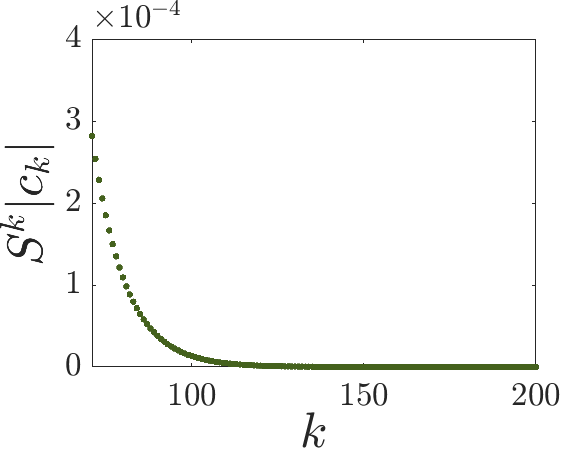}
}\vspace{-0.2cm}
\subfigure[]{\label{Fig1c}
\includegraphics[width=0.4\textwidth]{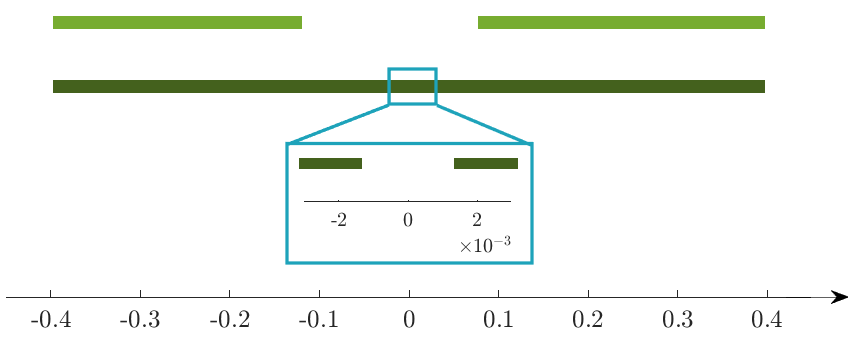}}
\caption{\subref{Fig1a} \textbf{Light green}: The convergence of the sequence $|c_k|^{1/k}$ to the value $2$ indicates that $\phi_{\lambda}$ is analytic over $\D^1(1/2)$. \textbf{Dark green}: The sequence $(|P_k\phi_{\lambda}|)_k$ is upper bounded by the value $5$. \subref{Fig1b} The decreasing sequence $|c_k|S^k$ allows to set $\max_{k>70}|c_{k}|S^{k} < 4\times10^{-4}$. \subref{Fig1c} An inner approximation of the ROA is computed with $R = 0.39$ and with the bounds obtained from Proposition \eqref{thm:error_phi1} (light green) and Proposition \eqref{thm:error_phi2} (dark green).}
\label{Figure1}
\end{figure}

\noindent As shown in Figure \ref{Figure1} \subref{Fig1a} (light green), the sequence $|c_k|^{1/k}$ converges to $2$, which indicates a radius of convergence $\rho=1/2$, according to \eqref{radius_1D}. Here, we set $S = 0.45$ and $R = 0.39$. Moreover, an error bound on $|\phi_{\lambda}|$ over $\D^1(S)$ is computed from its Taylor series for $N \gg 1$. In particular, an upper bound of the sequence $(|P_k\phi_{\lambda}|)_k$ indicates that $\phi_{\lambda} < 5$ (Figure \ref{Figure1} \subref{Fig1a} (dark green)). For $N = 70$, we obtain the error bound $|\phi_{\lambda}-P_N\phi_{\lambda}| < 0.0015 $ from Proposition \ref{thm:error_phi1}. Alternatively, a bound on $\max_{k > N}|c_k|S^{k}$ is estimated to $\max_{k > N}|c_k|S^{k} < 4 \times 10^{-4}$ (see Figure \ref{Figure1} \subref{Fig1b}) and we obtain the error bound $|\phi_{\lambda}-P_N\phi_{\lambda}| < 1.16\times 10^{-7} $ from Proposition \ref{thm:error_phi2}. Note that, in this one-dimensional case, the eigenfunction can be computed exactly and is given by 
$$
\phi_{\lambda}(x) = \dfrac{x}{1-2x} =\sum_{k\geq 1}2^{k-1}x^{k}.
$$
We can easily verify that the series converges as \mbox{$|x|<1/2$} and the Taylor coefficients satisfy the bounds obtained above from Figure \ref{Figure1} \subref{Fig1a} and \subref{Fig1b}.
Figure \ref{Figure1} \subref{Fig1c} shows inner approximations of the ROA obtained with Proposition \ref{th_ROA} for $D = \D^n(R)$. The approximations correspond to the set $\{x\in R ~|~ \widetilde{V}(x) > \gamma_1\}$ where $\gamma_1$ has been computed by using the error bound on $|\phi_{\lambda_1}-P_N\phi_{\lambda_1}|$ from Propositions \ref{thm:error_phi1} (light green) and \ref{thm:error_phi2} (dark green). Note that Proposition \ref{thm:error_phi2} provides a better approximation since the bound is less conservative. In both cases, the right boundary of the ROA is properly captured, while the approximation is conservative at the left boundary. This is due to the radius of convergence of $\phi_{\lambda}$, which restricts the approximation to $\D^n(R)$.

\begin{figure}[b!]
\centering
\vspace{-0.4cm}
\subfigure[]{
    \label{Fig2a}   \hspace{-0.2cm}\includegraphics[width=0.2\textwidth]{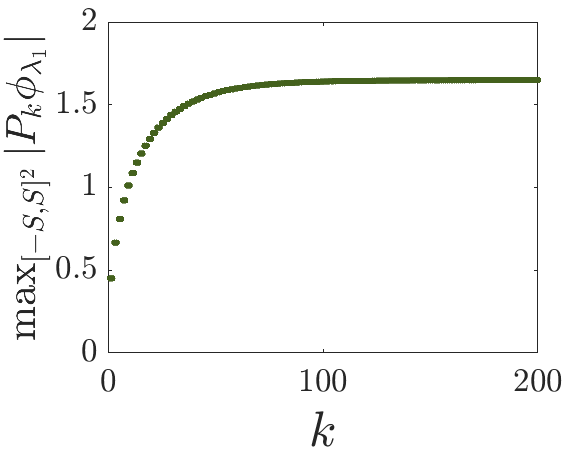}
}~\subfigure[]{
    \label{Fig2b}
\includegraphics[width=0.2\textwidth]{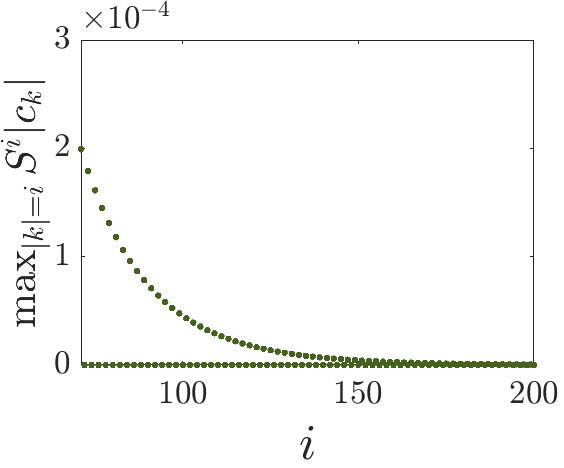}
}\vspace{-0.3cm}
\subfigure[]{\hspace{-0.6cm}\label{Fig2c}
\includegraphics[width=0.2\textwidth]{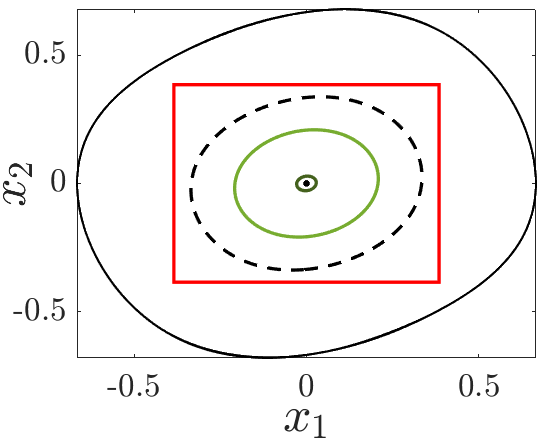}}
\caption{\subref{Fig2a} The sequence $(|P_k\phi_{\lambda}|)_k$ is upper bounded by the value $2$. \subref{Fig2b} The decreasing sequence of the nonzero coefficients $\max_{|k|=i}S^i|c_k|$ allows to set $\max_{|k|>70}|c_{k}|S^{k} < 3\times10^{-4}$. \subref{Fig2c} An inner approximation of the ROA is computed with $R = 0.385$ and with the bounds obtained from Proposition \eqref{thm:error_phi1} (light green) and Proposition \eqref{thm:error_phi2} (dark green). The dotted black curve is the largest level set $\widetilde{V}=\gamma_2$ inside $\D^n(R)$ (red box) while the full black curve is the true ROA of the system.} 
\label{Figure2}
\end{figure}

\paragraph{Example 2} We consider the Van der Pol dynamics 
$$
\left\{
\begin{array}{rcl}
\dot{x}_1 & = & -x_2,\\[0.2cm]
\dot{x}_2 & = & -\mu(1-9x_1^2)x_2 + x_1,
\end{array}
\right.
$$
where $\mu = 1/5$ and $[x_1,x_2]\in[-1,1]^2$. The system admits a stable equilibrium at the origin, whose region of attraction is bounded by an unstable limit cycle. Principal Koopman eigenfunctions are associated with two complex conjugate eigenvalues of $\mathbf{J}_F(0)$ and we have $V = 2|\phi_{\lambda_1}|^2$. The quantity \eqref{coeff_pratique} related to $\phi_{\lambda_1}$ is computed for different values $\rho \in[0,1]^2$ and the minimizer is $(0.4550,0.4690)$. We thus set $S = 0.43$ and $R = 0.385$. A bound on $|\phi_{\lambda_1}|$ and $\max_{|k| > N}|c_k|S^{k}$ are computed along similar lines as for Example 1. According to Figure \ref{Figure2} \subref{Fig2a} and \subref{Fig2b}, we set \mbox{$|\phi_{\lambda_1}| < 2$} and $\max_{|k| > N}|c_k|S^{k} < 3 \times 10^{-4}$ for $N = 70$ where the latter bound was computed by evaluating the maximum value of the sequence $(\max_{|k| = i}|c_k|S^i)_{70<i<200}$. Next, we obtain the error bounds $|\phi_{\lambda}-P_N\phi_{\lambda}| < 0.0075 $ and $|\phi_{\lambda}-P_N\phi_{\lambda}| < 9.01\times 10^{-5}$ from Proposition \ref{thm:error_phi1} and \ref{thm:error_phi2}, respectively.

Figure \ref{Figure2} \subref{Fig2c} shows approximations of the ROA obtained with Proposition \ref{th_ROA} for $D = \D^n(R)$. The approximations correspond to the set $\{x\in R ~|~ \widetilde{V}(x) > \gamma_1\}$ where $\gamma_1$ has been computed by using the error bound on $|\phi_{\lambda_1}-P_N\phi_{\lambda_1}|$ from Propositions \ref{thm:error_phi1} (light green) and \ref{thm:error_phi2} (dark green). The dotted black line is the largest level set $\widetilde{V}=\gamma_2$ inside $\D^n(R)$. As in the first example, we observe a better approximation with \eqref{eq:error_phi2}. Again, the approximation is restricted to $D = \D^n(R)$ due to the radius of convergence of the Taylor expansion of the eigenfunction.

\section{Conclusions and Perspectives}\label{conclusion}

In this paper, we have developed error bounds on the approximation of a Lyapunov function and its time derivative. We assume that the Lyapunov function is obtained from analytic eigenfunctions that are approximated by truncated Taylor series. The error bounds were leveraged to obtain a rigorous inner approximation of the region of attraction, which is solely based on sufficient conditions on the level set of the Lyapunov function candidate. Finally, our results were illustrated with two examples.

The obtained results appear to be conservative, mostly because they are restricted by the radius of convergence of the eigenfunctions. This limitation could be overcome by considering Taylor expansions at other states than the equilibrium, or by considering other sets of basis functions that do not rely on the underlying analyticity assumption. Finally, the potential of the surrogate system (Proposition \ref{vector_field}) could be further investigated from a numerical point of view.   

\addtolength{\textheight}{-12cm}

\end{document}